\title{On the Modern Structure of the Gauss-Landau Theorem}
\author{Manuel M., Aguilera \\
\small Mathematical Sciences Department \\
\small University of Puerto Rico, Mayaguez Campus \\
\small \texttt{alex.martinez13@upr.edu}}
\date{}
\theoremstyle{plain}
\newtheorem{theorem}{Theorem}
\newtheorem*{definition}{Definition}
\newtheorem{proposition}[theorem]{Proposition}
\newtheorem*{problem}{Problem}
\begin{document}

\maketitle

\begin{abstract}
We formalize the Gauss-Landau theorem, providing a unified prime factorization approach to computing the GCD and LCM of finite nonzero integer sets. Although commonly used as a heuristic or technique in elementary number theory education, these theorems have not been explicitly formalized or named in the literature. This formalization aims to enhance understanding and facilitate adoption in mathematical instruction and research.
\end{abstract}
\section{Introduction}
The greatest common divisor (GCD) and least common multiple (LCM) are fundamental concepts in elementary number theory with broad applications. Traditional treatments focus on Euclid's algorithm and direct divisibility properties. However, a factorization-based perspective — here named the \emph{Gauss-Landau Theorem} — provides a more structured and powerful approach. This approach uses the unique prime factorization of integers to express the GCD and LCM through the minimum and maximum exponents of the involved primes, respectively. While widely employed as a technique in textbooks and problem-solving, it remains under-recognized as a formal theorem with pedagogical and theoretical value. In this paper, we formally state, prove, and unify the Gauss-Landau Theorem for GCD and LCM, showing their deep symmetry and practical implications.

\section{Ancient Background}
The inquiry into the nature of divisibility is among the most venerable pursuits in number theory, reaching back to the ancient Greek tradition. In his monumental treatise, \emph{Elements}, Book VII, Euclid~\cite{Euclid} lays the foundation of what would later become the arithmetic of integers. There, he introduces the notion of a \emph{number} as a multitude composed of units:

\begin{proposition}[Euclid, Elements VII.1]
A number is a multitude composed of units.
\end{proposition}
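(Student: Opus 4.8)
The plan is to treat Euclid's dictum not as a theorem in the deductive sense --- it functions in Book VII as a foundational definition --- but as a structural assertion that can be rendered precise within a modern arithmetic and then verified there. First I would fix the ambient setting: work in the additive monoid $(\mathbb{Z}_{>0}, +)$, designate the \emph{unit} to be the element $1$, and define a \emph{multitude composed of units} to be any finite iterated sum $1 + 1 + \cdots + 1$. Under this dictionary the proposition becomes the concrete claim that every positive integer $n$ admits the representation $n = \sum_{i=1}^{n} 1$, and, conversely, that every such sum is again a positive integer.

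Second, I would discharge the mathematical content by induction on $n$ via the Peano axioms (equivalently, the well-ordering principle): the base case $n = 1$ is the bare unit, and the inductive step appends one further unit through the successor map $n \mapsto n + 1$. Together with the trivial reverse inclusion, this identifies the class of multitudes composed of units with $\mathbb{Z}_{>0}$ exactly. I would then add a short remark on faithfulness --- that the cancellation law for $+$ forces the number of summands to be uniquely determined --- so that the notion Euclid isolates is $\mathbb{Z}_{>0}$ itself and nothing coarser.

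The main obstacle is not the arithmetic, which is elementary, but the hermeneutic step: pinning down what is meant by ``multitude'' and ``unit'' precisely enough that the formalization is unambiguous, while remaining faithful both to the text of Book VII and to Euclid's deliberate refusal to count the unit itself among the numbers. I expect most of the write-up to be spent defending this translation, with the inductive verification relegated to a line or two.
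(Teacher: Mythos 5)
Your instinct is right, and it matches the paper's treatment more closely than you might expect: the paper offers no proof of this statement at all. It is quoted from Book VII as a historical definition --- Euclid's characterization of what a ``number'' is --- and the paper simply records it as a named proposition without argument, using it as background for the later discussion of divisibility. So there is nothing in the paper to compare your verification against. That said, your proposal is a sound way to give the statement genuine mathematical content where the paper gives none: translating ``multitude composed of units'' into iterated sums of $1$ in $(\mathbb{Z}_{>0},+)$ and checking by induction that this class is exactly $\mathbb{Z}_{>0}$ is the standard modern reading, and your remark that cancellation makes the number of summands well-defined is a worthwhile addition. The one caution is exactly the one you raise yourself: Euclid excludes the unit from the numbers (for him a ``multitude'' begins at two), so a fully faithful formalization would identify the multitudes with $\mathbb{Z}_{\geq 2}$ rather than $\mathbb{Z}_{>0}$; since the paper silently adopts the modern convention, either choice is defensible, but you should state which one you take. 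In short: no gap, but you are proving something the paper only asserts as a definition.
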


Subsequently, Euclid characterizes \emph{prime numbers}—those indivisible by any other number save the unit itself:

\begin{proposition}[Euclid, Elements VII.2]
A prime number is that which is measured by a unit alone.
\end{proposition}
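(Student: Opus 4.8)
The plan is to recognize at the outset that \emph{Elements} VII.2 is a \emph{definition} rather than a derivable assertion: it fixes the meaning of the word ``prime'' within the vocabulary of units and multitudes established in VII.1. A ``proof'' here therefore cannot be a deduction from prior principles; instead the task reduces to (i) translating Euclid's archaic phrasing into the modern language of divisibility, and (ii) verifying that the resulting notion is well-posed and agrees with the contemporary definition of a prime number, which is the only nonvacuous content one can reasonably attach to the statement.

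First I would render the primitive relation ``$a$ measures $b$'' as ``$a \mid b$'', i.e.\ the existence of a number $c$ (in the sense of VII.1) with $b = ac$, and read ``unit'' as $1$ and ``number'' as an integer $n \ge 2$. Under this dictionary, ``$p$ is measured by a unit alone'' is to be understood as the assertion that among the numbers measuring $p$ the only one smaller than $p$ is the unit; equivalently, the positive divisors of $p$ are exactly $1$ and $p$. The second step is then simply to observe that this coincides with the modern definition, so that the translation is faithful and the notion unambiguous for every $p \ge 2$, and nothing further is required.

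The main obstacle is interpretive rather than computational. Euclid nowhere states that every number measures itself, nor that the unit measures every number, and he pointedly excludes $1$ from the class of ``numbers''; taken literally, the phrase ``by a unit alone'' therefore sits in tension with the evident fact that $p \mid p$. Dispelling this requires adopting the convention — implicit in the architecture of Book VII and made explicit in the later scholia — that a number is not reckoned among its own \emph{proper} measures, so that ``the measures of $p$'' is read as its proper measures. Once that convention is in force the statement is consistent, trivial as a theorem, and ready to serve its real purpose: underwriting the unique-factorization apparatus on which the Gauss--Landau Theorem will subsequently rest.
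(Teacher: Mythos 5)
Your reading is correct and matches the paper's own treatment: the paper states this ``proposition'' with no proof at all, since it is a verbatim quotation of Euclid's \emph{definition} of a prime and admits no deduction from prior principles. Your translation into modern divisibility language (the positive divisors of $p$ are exactly $1$ and $p$, reading ``measured by a unit alone'' via proper measures) is faithful and supplies the only nonvacuous content one could ask for here, so there is nothing to object to.
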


In these early propositions, Euclid articulates the core principles that govern the theory of divisibility. Most profoundly, he presents what undergirds the modern \emph{Euclidean Algorithm}:

\begin{proposition}[Euclid, Elements VII.3]
If a number divides two others, it also divides their difference.
\end{proposition}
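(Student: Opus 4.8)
The plan is to reduce the statement to the distributive law in the integers. First I would fix notation matching Euclid's hypothesis: suppose the number $c$ \emph{measures} (divides) both $a$ and $b$, which in modern language means there exist integers $m$ and $n$ with $a = cm$ and $b = cn$. In Euclid's original setting every quantity is a positive whole number and one tacitly assumes $a > b$, so that the difference is again a legitimate ``number''; in the modern reformulation we simply work in $\mathbb{Z}$ and drop that restriction.

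Next I would compute the difference directly: $a - b = cm - cn = c(m-n)$. Since $m - n \in \mathbb{Z}$, this exhibits $a - b$ as an integer multiple of $c$, which is precisely the definition of $c \mid (a-b)$. That is the entire argument; no case analysis or induction is needed once the divisibility hypotheses are written out as equations.

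Finally I would record the mild strengthening that the same computation, with $m+n$ in place of $m-n$ — and more generally $xm + yn$ — shows $c \mid (a+b)$ and indeed $c \mid (xa + yb)$ for all $x,y \in \mathbb{Z}$. This linearity of divisibility is the form in which the proposition is actually deployed: it is the invariant that drives the Euclidean algorithm, and, later in this paper, it underlies the exponent-wise comparison at each prime that yields the factorization identities for the GCD and LCM.

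The only genuine obstacle here is bookkeeping rather than mathematics. In the ancient, magnitude-based formulation one must additionally check that the difference of two numbers is still a number, i.e.\ that positivity is preserved, which is why Euclid phrases the result for the \emph{difference} rather than an arbitrary integer combination. The modern statement sidesteps this entirely by passing to the ring $\mathbb{Z}$, where closure under subtraction is automatic; thus the ``hard part'' is really just making explicit the translation from Euclid's language of measuring to the language of integer multiples.
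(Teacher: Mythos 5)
Your proof is correct and is the standard argument: writing $a = cm$, $b = cn$ and factoring $a - b = c(m-n)$ is exactly how this divisibility fact is established, and the paper itself states the proposition without proof (citing Euclid) before using it to justify the step $\gcd(a,b) = \gcd(b, a \bmod b)$ in the Euclidean algorithm. Your remarks on the linear-combination generalization and the positivity bookkeeping in Euclid's magnitude-based setting are accurate and do not change the substance of the argument.
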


This recursive method, elegant in its simplicity, permits the computation of the \emph{greatest common divisor} (gcd) of two integers. Given natural numbers \( a \) and \( b \) with \( a > b \), the algorithm proceeds by repeated application of the identity
\[
\gcd(a, b) = \gcd(b, a \bmod b),
\]
Until the remainder becomes zero, the last nonzero remainder is the desired greatest common measure. 

\vspace{2mm}
While the concept of the greatest common divisor is made explicit in Euclid's \emph{Elements}, the notion of the \emph{least common multiple} does not appear by name. Nevertheless, Euclid introduces ideas that are conceptually adjacent, especially in his treatment of ratios and common measures. For instance, in Book VII, Proposition 34, he shows how to find the least numbers that have the same ratio as two given numbers:

\begin{proposition}[Euclid, Elements VII.34]
To find the least numbers which have the same ratio as any given numbers.
\end{proposition}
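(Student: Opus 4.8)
The plan is to translate Euclid's construction problem into modern terms: given positive integers $a$ and $b$, exhibit the componentwise least pair $(m,n)$ of positive integers with $m : n = a : b$. First I would set $d = \gcd(a,b)$ — available by the Euclidean algorithm built on Proposition VII.3 — and write $a = d\,a'$ and $b = d\,b'$. The claim to be verified is that $(a',b')$ is exactly this least pair. I would organize the argument in three steps: (i) $a' : b' = a : b$; (ii) $\gcd(a',b') = 1$; and (iii) every pair of positive integers in the ratio $a:b$ is a positive-integer multiple of $(a',b')$ componentwise, hence no smaller.

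Step (i) is a one-line computation: $a\,b' = d\,a'\,b' = b\,a'$, which is precisely the condition for two pairs of numbers to be in the same ratio. For step (ii) I would use the scaling identity $\gcd(kx,ky) = k\,\gcd(x,y)$ to get $d = \gcd(a,b) = d\,\gcd(a',b')$, whence $\gcd(a',b') = 1$; equivalently, any common divisor $e$ of $a'$ and $b'$ yields a common divisor $de$ of $a$ and $b$, which must divide $d$ because every common divisor divides the gcd (itself a consequence of VII.3 via the Euclidean algorithm), forcing $e = 1$.

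Step (iii) is the crux and the place I expect the real work. Let $(m,n)$ be any pair of positive integers with $a\,n = b\,m$. Substituting $a = d\,a'$, $b = d\,b'$ and cancelling $d$ gives $a'\,n = b'\,m$. Since $a' \mid b'\,m$ and $\gcd(a',b') = 1$, I would invoke the coprime divisibility lemma to conclude $a' \mid m$, and symmetrically $b' \mid n$; hence $m \ge a'$ and $n \ge b'$, with equality precisely for $(m,n) = (a',b')$. So $(a',b')$ is the unique least pair in the given ratio, which is exactly what the proposition asks one to find.

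The single genuine obstacle is the coprime divisibility lemma — if $\gcd(a',b') = 1$ and $a' \mid b'm$ then $a' \mid m$ — which is not among the propositions quoted above and which one must take care not to derive circularly from the very ratio facts being established. In a modern development this follows immediately from Bézout's identity: writing $1 = a'x + b'y$ with $x,y \in \mathbb{Z}$ and multiplying by $m$ gives $m = a'(xm) + (b'm)y$, where $a'$ divides both terms on the right. With that lemma secured, steps (i)--(iii) are routine, and the map $a \mapsto a/\gcd(a,b)$, $b \mapsto b/\gcd(a,b)$ solves Euclid's problem.
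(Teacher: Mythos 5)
Your argument is correct, and it is considerably more than what the paper actually supplies: the paper does not prove Proposition VII.34 in general, but only works the single instance $a=8$, $b=12$, reduces $8:12$ to $2:3$, and \emph{asserts} that ``any other pair with that same ratio would be multiples of them.'' That asserted clause is precisely your step (iii), which you correctly identify as the crux and which carries all the real content. Your route --- divide by $d=\gcd(a,b)$ to get $(a',b')$, verify $a\,b'=b\,a'$, derive $\gcd(a',b')=1$ from the scaling identity $\gcd(ka,kb)=k\gcd(a,b)$, and then use the coprime divisibility lemma on $a'n=b'm$ to force $(m,n)=t\,(a',b')$ --- is sound, and your handling of the one nontrivial ingredient via B\'ezout is clean and avoids the circularity you flag. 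Two remarks by way of comparison. First, what the paper's example buys is only pedagogical illustration; what your argument buys is the actual theorem, including the uniqueness of the least pair, for arbitrary $a,b$. Second, if you wanted to stay closer to Euclid's own development rather than invoking B\'ezout, the coprime divisibility lemma is also available as a consequence of unique factorization (the Fundamental Theorem of Arithmetic proved earlier in the paper): if $\gcd(a',b')=1$ and $a'\mid b'm$, then every prime power in $a'$ must appear in $m$ since none of it can be absorbed by $b'$. Either derivation closes the gap; the paper itself never does.
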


This proposition seeks to reduce a given ratio to its simplest terms. For example:
\begin{problem}
Given the numbers \( a = 8 \) and \( b = 12 \). Determine the least numbers which are in the same ratio as \( a \) and \( b \). 
\end{problem}

\begin{proof} 
Given the numbers \( a = 8 \) and \( b = 12 \), the ratio \( 8 : 12 \) can be simplified to \( 2 : 3 \). These integers, \( 2 \) and \( 3 \), are the least numbers having the same ratio as \( 8 \) and \( 12 \), since any other pair with that same ratio (like \( 4 : 6 \) or \( 10 : 15 \)) would be multiples of them.
\end{proof}

This focus on reducing to minimal terms reflects Euclid’s concern with the fundamental structure of numerical relationships—an idea that, while not the least common multiple itself, conceptually foreshadows it through its emphasis on proportion, divisibility, and minimality.

\vspace{2mm}

In the 19th century, Carl Friedrich Gauss extended these ideas in his \emph{Disquisitiones Arithmeticae} \cite{Gauss}. He systematized number theory and provided the first rigorous proof of the Fundamental Theorem of Arithmetic: 
\begin{theorem}
    Every positive integer $n > 1$ has a unique prime factorization, up to order.
\end{theorem}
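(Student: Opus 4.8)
The plan is to split the assertion into an \emph{existence} statement — every integer $n>1$ is a product of primes — and a \emph{uniqueness} statement — any two such products coincide up to the order of the factors — and to prove each by induction. Existence is the easier half: arguing by strong induction on $n$, the base case $n=2$ holds because $2$ is prime, and in the inductive step either $n$ is itself prime (hence its own factorization) or $n=ab$ with $1<a,b<n$, in which case concatenating the prime factorizations of $a$ and $b$ supplied by the induction hypothesis gives one for $n$.

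The heart of the matter is uniqueness, and the indispensable tool is \emph{Euclid's Lemma}: if a prime $p$ divides a product $ab$, then $p\mid a$ or $p\mid b$. I would obtain this from the Euclidean algorithm recalled above. If $p\nmid a$, then since the only positive divisors of $p$ are $1$ and $p$ we have $\gcd(p,a)=1$; unwinding the Euclidean algorithm (Bézout's identity) yields integers $x,y$ with $px+ay=1$, and multiplying through by $b$ gives $b=p(bx)+(ab)y$. Both summands on the right are divisible by $p$ — the second because $p\mid ab$ — so $p\mid b$. A routine induction then extends the lemma to $p\mid a_1\cdots a_k\implies p\mid a_i$ for some $i$.

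Granting the lemma, uniqueness follows by induction on $n$. Suppose $n=p_1\cdots p_r=q_1\cdots q_s$ with every $p_i$ and $q_j$ prime. Then $p_1$ divides $q_1\cdots q_s$, so by the lemma $p_1\mid q_j$ for some index $j$; since $q_j$ is prime and $p_1>1$, this forces $p_1=q_j$. Cancelling this common factor leaves a strictly smaller integer presented as two products of primes, and the induction hypothesis applies to it, giving $r-1=s-1$ together with a pairing of the remaining $p$'s and $q$'s. Reinstating $p_1=q_j$ completes the matching.

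The step I expect to be the main obstacle is the honest proof of Euclid's Lemma: the two inductions are pure bookkeeping, but the lemma genuinely needs the Bézout relation — the fact that $\gcd(p,a)=1$ is witnessed by an explicit integer combination $px+ay=1$ — and so it draws on the full computational force of the Euclidean algorithm rather than merely the divisibility observation of Proposition VII.3.
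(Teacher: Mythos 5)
Your proof follows essentially the same route as the paper: existence by strong induction on $n$, and uniqueness by induction after using Euclid's Lemma to match $p_1$ with some $q_j$ and cancel. The one difference is in your favor --- the paper simply asserts that a prime dividing $q_1\cdots q_l$ must divide some $q_j$, whereas you actually prove this via B\'ezout's identity, which is the step that carries the real logical weight of the theorem.
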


\begin{proof}[Proof (Gauss's First Proof)]
    
We divide the proof into two parts: existence and uniqueness.
\paragraph{Existence.} We prove by induction that every integer $n > 1$ can be factored into a product of primes. If $n$ is prime, the claim is trivially satisfied. Otherwise, $n$ is composite and can be written as $n = ab$, where $1 < a < n$ and $1 < b < n$. By the inductive hypothesis, both $a$ and $b$ can be factored into primes. Then, their product $n$ is also a product of primes. This completes the proof of existence.
\paragraph{Uniqueness.} Suppose $n > 1$ admits two distinct prime factorizations:
\[
n = p_1 p_2 \cdots p_k = q_1 q_2 \cdots q_l
\]
where all $p_i$ and $q_j$ are primes. We aim to show that the multisets $\{p_1, \ldots, p_k\}$ and $\{q_1, \ldots, q_l\}$ are equal. We proceed by induction on $n$. If $n$ is prime, the only possible factorization is $n$ itself, and uniqueness holds. Now assume uniqueness holds for all integers less than $n$. Consider the two factorizations of $n$ as above. Then $p_1$ divides the product $q_1 \cdots q_l$. Since $p_1$ is prime, it must divide one of the $q_j$, say $q_m$. But $q_m$ is also prime, so $p_1 = q_m$. Without loss of generality, we may reorder so that $p_1 = q_1$. Canceling this common prime from both sides, we obtain:
\[
\frac{n}{p_1} = p_2 \cdots p_k = q_2 \cdots q_l
\]
By the inductive hypothesis, the factorizations of $n/p_1$ are identical up to order. Therefore, the original factorizations of $n$ must also be the same up to order.
\end{proof}

After demonstrating the above, it was possible to formally define that every number $a_i$ has a unique prime factorization, which was further defined as follows

\begin{definition}
Let $a_1, a_2, \dots, a_n$ be a finite set of non-zero integers. Each $a_i$ has a unique prime factorization:
\[
    a_i = \prod_{j=1}^k p_j^{\beta_{ij}},
\]
where $p_j$ are the primes involved in at least one $a_i$, and $\beta_{ij} \in \mathbb{N}_0$ is the exponent of $p_j$ in $a_i$. If $p_j$ does not appear in $a_i$, we define $\beta_{ij} = 0$.
\end{definition}

\section{A Forgotten Theorem}
A lesser-known but mathematically striking identity emerges as an indirect consequence of the Fundamental Theorem of Arithmetic—one which, to the best of the author's knowledge, has not been explicitly recorded in the standard canon of number theory.
\vspace{2mm}

\subsection{Tracing the Theorem’s Foundations}

Let \( a_1, a_2, \dots, a_n \in \mathbb{N} \). By the \textbf{Fundamental Theorem of Arithmetic}, Each \( a_i \in \mathbb{N} \) has a unique factorization:
\[
a_i = \prod_{j=1}^{k} p_j^{\beta_{i,j}}
\]
Suppose a number \( d \in \mathbb{N} \) divides all \( a_i \). Then its prime factorization must be of the form:
\[
d = \prod_{j=1}^{k} p_j^{\delta_j} \quad \text{with } \delta_j \leq \beta_{i,j} \text{ for all } i
\]
In order to be the \textbf{greatest} such divisor, we must take the \textbf{largest possible} values of \( \delta_j \) that still satisfy the inequality, which gives:
\[
\delta_j = \min(\beta_{1,j}, \beta_{2,j}, \dots, \beta_{n,j})
\]
Hence:
\[
\gcd(a_1, a_2, \dots, a_n) = \prod_{j=1}^{k} p_j^{\min(\beta_{1,j}, \dots, \beta_{n,j})}
\]
Suppose a number \( m \in \mathbb{N} \) is divisible by all \( a_i \), i.e., \( a_i \mid m \) for all \( i \). Then its prime factorization must satisfy:

\[
m = \prod_{j=1}^{k} p_j^{\mu_j} \quad \text{with } \mu_j \geq \beta_{i,j} \text{ for all } i
\]
To be the \textbf{smallest} such multiple, we must choose:
\[
\mu_j = \max(\beta_{1,j}, \beta_{2,j}, \dots, \beta_{n,j})
\]
Therefore:
\[
\mathrm{lcm}(a_1, a_2, \dots, a_n) = \prod_{j=1}^{k} p_j^{\max(\beta_{1,j}, \dots, \beta_{n,j})}
\]

\subsection{The Two-Number Case}

In the particular case of two positive integers \( a \) and \( b \), a fundamental identity arises naturally from the prime factorizations of the numbers involved:
\[
a \cdot b = \gcd(a, b) \cdot \mathrm{lcm}(a, b)
\]
This identity is an immediate consequence of the additive structure of the exponents in their prime factorizations. Let:
\[
a = \prod_{j=1}^{k} p_j^{\beta_{1,j}}, \quad b = \prod_{j=1}^{k} p_j^{\beta_{2,j}},
\]
where each \( p_j \) is a prime dividing at least one of the two numbers, and the exponents \( \beta_{i,j} \in \mathbb{N}_0 \) follow the convention that \( \beta_{i,j} = 0 \) if \( p_j \nmid a_i \).
Then:
\[
\gcd(a, b) = \prod_{j=1}^{k} p_j^{\min(\beta_{1,j}, \beta_{2,j})}, \quad 
\mathrm{lcm}(a, b) = \prod_{j=1}^{k} p_j^{\max(\beta_{1,j}, \beta_{2,j})}
\]
Multiplying these two expressions together yields:
\[
\gcd(a, b) \cdot \mathrm{lcm}(a, b) = \prod_{j=1}^{k} p_j^{\min(\beta_{1,j}, \beta_{2,j}) + \max(\beta_{1,j}, \beta_{2,j})}
= \prod_{j=1}^{k} p_j^{\beta_{1,j} + \beta_{2,j}} = a \cdot b
\]

\subsection{Theorem Statement and Definition}

After Gauss, Edmund Landau \cite{Landau1903} was the first to systematically study the use of maximum and minimum exponents in prime factorizations, as noted by Dickson, who cites Landau's foundational work in 1919 \cite{Dickson1919}. Landau's contributions laid the essential groundwork for expressing the GCD and LMC in terms of prime exponents, establishing him as the primary figure credited with this aspect of number theory. For these reasons, the theorem presented here will be referred to as the \emph{Gauss-Landau Theorem} for identification in elementary number theory.
\vspace{2mm}
\begin{theorem}[Gauss-Landau Theorem]
    Let \(a_1, a_2, \dots, a_n \in \mathbb{N}\) be positive integers with unique prime factorizations
\[
a_i = \prod_{j=1}^k p_j^{\beta_{i,j}}, \quad \text{where } p_j \text{ are primes and } \beta_{i,j} \geq 0.
\]
Then the greatest common divisor of \(a_1, a_2, \dots, a_n\) is given by
\[
\gcd(a_1, a_2, \dots, a_n) = \prod_{j=1}^k p_j^{\min(\beta_{1,j}, \beta_{2,j}, \dots, \beta_{n,j})}.
\]
Moreover, the least common multiple of \(a_1, a_2, \dots, a_n\) is
\[
\mathrm{lcm}(a_1, a_2, \dots, a_n) = \prod_{j=1}^k p_j^{\max(\beta_{1,j}, \beta_{2,j}, \dots, \beta_{n,j})}.
\]
\end{theorem}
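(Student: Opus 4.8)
The plan is to reduce everything to a single elementary lemma on divisibility expressed through prime exponents, and then run two parallel ``extremal'' arguments. \textbf{Step 1: the exponent comparison lemma.} Using the Fundamental Theorem of Arithmetic proved above, I would first show that for positive integers $x = \prod_j p_j^{x_j}$ and $y = \prod_j p_j^{y_j}$ written over any common finite list of primes $p_j$ (padding exponents with zeros as needed), one has $x \mid y$ if and only if $x_j \le y_j$ for every $j$. The ``if'' direction is immediate, since then $y = x \cdot \prod_j p_j^{y_j - x_j}$. For ``only if,'' write $y = xz$ with $z \in \mathbb{N}$, factor $z$ into primes, and invoke uniqueness of factorization to see that $z$ involves only the $p_j$ and that $y_j = x_j + z_j$ with $z_j \ge 0$. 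A corollary of the same uniqueness, which I would record explicitly, is that every divisor of some $a_i$ has all its prime factors among $p_1, \dots, p_k$, so coordinatewise exponent comparison is always meaningful.

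\textbf{Step 2: the gcd identity.} Put $d = \prod_{j=1}^k p_j^{\mu_j}$ with $\mu_j = \min_i \beta_{i,j}$. Since $\mu_j \le \beta_{i,j}$ for all $i,j$, Step 1 gives $d \mid a_i$ for every $i$, so $d$ is a common divisor. Conversely, let $c$ be any common divisor; by the corollary, $c = \prod_{j=1}^k p_j^{\gamma_j}$ for some $\gamma_j \ge 0$, and $c \mid a_i$ forces $\gamma_j \le \beta_{i,j}$ for each $i$, hence $\gamma_j \le \mu_j$ after taking the minimum over $i$, so $c \mid d$ by Step 1 again. Thus $d$ is a common divisor divisible-by which every common divisor is, and since divisibility among positive integers implies $\le$, we get $c \le d$ for all such $c$; therefore $d = \gcd(a_1, \dots, a_n)$.

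\textbf{Step 3: the lcm identity, dually.} Put $m = \prod_{j=1}^k p_j^{\nu_j}$ with $\nu_j = \max_i \beta_{i,j}$. Since $\beta_{i,j} \le \nu_j$, Step 1 gives $a_i \mid m$ for all $i$, so $m$ is a common multiple. Let $M$ be any positive common multiple; extend the prime list to include any extra primes occurring in $M$ and write $M = \prod_j p_j^{\lambda_j}$ over this extended list. Then $a_i \mid M$ forces $\beta_{i,j} \le \lambda_j$ for every $i$, hence $\nu_j \le \lambda_j$, so $m \mid M$ by Step 1 and thus $m \le M$; therefore $m = \mathrm{lcm}(a_1, \dots, a_n)$.

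\textbf{Main obstacle.} The only genuinely delicate point is ensuring that a common divisor, and the relevant part of a common multiple, is supported on the primes $p_1, \dots, p_k$ (or an explicitly enlarged finite list), so that the coordinatewise comparison of exponents is legitimate; this is precisely where uniqueness of factorization is doing the real work, which is why I would isolate it as the corollary in Step 1 rather than leave it implicit. Once that is in place, the remainder is the routine observation that, in the exponent lattice $\mathbb{N}_0^k$, the pointwise minimum is the greatest lower bound and the pointwise maximum is the least upper bound.
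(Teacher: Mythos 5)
Your proof is correct and rests on the same foundation as the paper's argument in Section 3.1 --- the Fundamental Theorem of Arithmetic plus coordinatewise comparison of prime exponents --- but it is organized differently in two ways worth noting. First, you isolate as an explicit lemma the equivalence $x \mid y \iff x_j \le y_j$ for all $j$, together with the corollary that any common divisor is supported on $p_1, \dots, p_k$; the paper simply asserts that a common divisor ``must be of the form'' $\prod_j p_j^{\delta_j}$ with $\delta_j \le \beta_{i,j}$, leaving exactly that lemma implicit. Second, and more substantively, your extremality step differs: the paper argues that to obtain the \emph{greatest} common divisor one should ``take the largest possible values of $\delta_j$,'' which tacitly identifies maximizing the product with maximizing each exponent separately; you instead prove that every common divisor \emph{divides} $d = \prod_{j} p_j^{\min_i \beta_{i,j}}$ and only then pass to $c \le d$, which avoids that conflation and delivers the stronger lattice-theoretic characterization of the gcd (and, dually, of the lcm, where your explicit enlargement of the prime list to cover an arbitrary common multiple $M$ is another detail the paper skips). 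Your version is the more defensible of the two; the paper's buys brevity at the cost of leaving the key divisibility lemma and the coordinatewise-maximization step unjustified.
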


\section{Modern Background}
As previously mentioned, the earliest notions related to the Gauss-Landau theorem appear in the book \textit{History of the Theory of Numbers} by L. E. Dickson \cite{Dickson1919}, where he cites the works of E. Landau from 1903 and 1909. 
In his 1903 paper \cite{Landau1903}, Landau investigated the maximum order of permutations in the symmetric group \( S_n \). 
\begin{theorem}
Every permutation decomposes uniquely into disjoint cycles whose lengths sum to \( n \):
\[
a_1 + a_2 + \cdots + a_k = n, \quad a_i \in \mathbb{Z}^+.
\]
The order of such a permutation is the least common multiple of its cycle lengths:
\[
\operatorname{ord}(\pi) = \mathrm{lcm}(a_1, a_2, \dots, a_k).
\]
\end{theorem}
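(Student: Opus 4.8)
The plan is to establish the two assertions separately, both by the standard orbit-theoretic argument, and then to observe that the final identity is exactly an instance of the Gauss-Landau description of the LCM.

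For the decomposition, I would let the cyclic group $\langle \pi \rangle$ act on $[n] = \{1, 2, \dots, n\}$ and partition $[n]$ into the orbits of this action. Since the orbits of any group action partition the underlying set, this yields pairwise disjoint subsets $O_1, \dots, O_k$ with $|O_1| + \dots + |O_k| = n$; putting $a_i = |O_i|$ recovers the additive relation $a_1 + \dots + a_k = n$. Next I would check that $\pi$ restricted to each orbit $O_i$ is a single cycle of length $a_i$: picking $x \in O_i$, the iterates $x, \pi(x), \pi^2(x), \dots$ exhaust $O_i$ and first return to $x$ at step $a_i$, so $\pi|_{O_i} = (x\ \pi(x)\ \cdots\ \pi^{a_i-1}(x))$. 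Uniqueness is then automatic, since the orbits are intrinsically determined by $\pi$: any expression of $\pi$ as a product of disjoint cycles must have exactly these orbits as supports, and the cyclic order within each support is forced once a starting element is chosen. (As customary, fixed points are the orbits of size $1$, which one may keep or discard by convention.)

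For the order, write $\pi = \sigma_1 \sigma_2 \cdots \sigma_k$ where $\sigma_i$ is the cycle supported on $O_i$. Disjoint cycles commute, so $\pi^m = \sigma_1^m \sigma_2^m \cdots \sigma_k^m$ for every $m \in \mathbb{Z}^+$, and the factors $\sigma_i^m$ still have pairwise disjoint supports; hence $\pi^m = \mathrm{id}$ if and only if $\sigma_i^m = \mathrm{id}$ for all $i$. Since a cycle of length $a_i$ has order exactly $a_i$ — it acts as a rotation of its support, so $\sigma_i^m = \mathrm{id}$ iff $a_i \mid m$ — we conclude that $\pi^m = \mathrm{id}$ precisely when $m$ is a common multiple of $a_1, \dots, a_k$. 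The least positive such $m$ is by definition $\mathrm{lcm}(a_1, \dots, a_k)$, so $\operatorname{ord}(\pi) = \mathrm{lcm}(a_1, \dots, a_k)$. Invoking the Gauss-Landau Theorem then rewrites this order as $\prod_j p_j^{\max_i \beta_{i,j}}$ in terms of the prime factorizations of the cycle lengths, which is the form Landau used in his 1903 study of the maximal order in $S_n$.

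I do not expect a genuine obstacle: the argument is entirely classical. The only points that need care are the precise wording of uniqueness — which holds only up to reordering the cycles and up to the inclusion or omission of length-one cycles — and the observation that the disjointness of supports is essential at the step $\pi^m = \mathrm{id} \iff (\forall i)\,\sigma_i^m = \mathrm{id}$, since without it the factors could interfere and the equivalence would fail.
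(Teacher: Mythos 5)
Your proof is correct and complete, but there is nothing in the paper to compare it against: the paper states this theorem without proof, presenting it as classical background for Landau's 1903 study of maximal orders in $S_n$, and the only adjacent material is the worked example computing $\operatorname{ord}(\pi) = \mathrm{lcm}(24,16) = 48$. Your argument is the standard one and supplies exactly what the paper omits: partitioning $[n]$ into the orbits of $\langle \pi \rangle$ gives the disjoint cycles and the relation $a_1 + \cdots + a_k = n$; uniqueness follows because the orbits are intrinsic to $\pi$; and the reduction of $\pi^m = \mathrm{id}$ to the simultaneous conditions $a_i \mid m$, via commutativity and disjointness of supports, identifies the order with $\mathrm{lcm}(a_1, \dots, a_k)$. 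Your two cautionary remarks are both well placed --- uniqueness does need the qualification ``up to ordering and up to the convention on length-one cycles,'' and the disjointness of supports is precisely what prevents the factors $\sigma_i^m$ from cancelling one another, so the equivalence $\pi^m = \mathrm{id} \iff (\forall i)\ \sigma_i^m = \mathrm{id}$ would indeed fail without it. The closing appeal to the Gauss--Landau Theorem to rewrite the order as $\prod_j p_j^{\max_i \beta_{i,j}}$ is not needed for the theorem itself, but it is a natural bridge to the paper's subsequent discussion of $g(n)$ and $f(n)$.
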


\noindent
In one of the earliest systematic treatments, Landau posed the problem of finding the maximum least common multiple among positive integers summing to \( n \),
\[
g(n) := \max_{\substack{a_1 + \cdots + a_k = n \\ a_i \geq 1}} \mathrm{lcm}(a_1, \dots, a_k),
\]
which corresponds to the maximal order of any permutation in \( S_n \) \cite[p.~47]{Dickson1919}.
\medskip
\begin{problem}
Consider \( n = 40 \) and a permutation \( \pi \in S_{40} \) whose cycle decomposition consists of two disjoint cycles of lengths 24 and 16, respectively. Determine the order of \( \pi \), i.e., the smallest positive integer \( m \) such that \( \pi^m = \mathrm{id} \).
\end{problem}
\begin{proof}[Solution]
   The order of a permutation \( \pi \in S_n \) is given by the least common multiple of the lengths of the disjoint cycles in its cycle decomposition. Here, the cycle lengths are 24 and 16, and since:
\[
24 = 2^3 \cdot 3, \quad 16 = 2^4,
\]
we have
\[
\mathrm{lcm}(24,16) = 2^{\max(3,4)} \cdot 3^{\max(1,0)} = 2^4 \cdot 3 = 48.
\]
Thus, \(\pi^{48} = \mathrm{id}\), and 48 is the order of \(\pi\).
\end{proof}

\medskip

In 1909, Landau \cite{Landau1909} further studied the maximum value of the LCM over all partitions of \( n \) into positive parts, formalizing and extending the problem from his 1903 work. He considered
\[
f(n) := \max_{\substack{a_1 + \cdots + a_k = n \\ a_i \geq 1}} \mathrm{lcm}(a_1, a_2, \dots, a_k),
\]
emphasizing the behavior of the maximal LCM and its prime factor exponents \cite[p. 52]{Dickson1919}.

\begin{definition}
Let \( n \in \mathbb{N} \). Define \( f(n) \) as the maximum value of the least common multiple of the parts in any partition of \( n \) into positive integers:
\[
f(n) := \max \left\{ \mathrm{lcm}(a_1, a_2, \dots, a_r) \;\middle|\; a_1 + a_2 + \cdots + a_r = n,\; a_i \in \mathbb{N} \right\}.
\] \end{definition}

\noindent
For instance, when \( n = 5 \), the set of all partitions into positive integers includes 
$$(5), (4+1), (3+2), (3+1+1), \dots.$$ 
Computing the least common multiple for each, we find:
\[
f(5) = \max \{5, \mathrm{lcm}(4,1), \mathrm{lcm}(3,2), \mathrm{lcm}(3,1,1), \dots \} = 6.
\]
Based on this, Landau established the following asymptotic behavior:
\[
\lim_{x \to \infty} \frac{\log f(x)}{\sqrt{x \log x}} = 1.
\]

\noindent
Let us solve this problem for \( n = 5 \), using both the classical version of the Gauss–Landau theorem from 1909 and its modern 2025 reformulation.

\begin{problem}
Let \( n = 5 \). List all the partitions of \( n \) into positive integers. For each partition, compute the least common multiple (LCM) of its parts.
\end{problem}

\begin{proof}[Classical Solution]
 We now compute Landau's example \( f(5) \), using modern notation and the Gauss--Landau classical theorem for the least common multiple. The function is defined as
\[
f(n) = \max \left\{ \mathrm{lcm}(a_1, a_2, \dots, a_r) \;\middle|\; a_1 + a_2 + \cdots + a_r = n,\; a_i \in \mathbb{N} \right\}.
\]

\noindent
In our case:
\[
f(5) = \max \left\{ \mathrm{lcm}(a_1, a_2, \dots, a_r) \;\middle|\; a_1 + \cdots + a_r = 5 \right\}.
\]

\noindent
We apply the \textbf{Gauss--Landau Classical Theorem}:
\[
\mathrm{lcm}(a_1, \dots, a_r) = \prod_{p \in \mathbb{P}} p^{\max(\beta_{1}, \dots, \beta_{r})}
\]
where \( \beta_i \) is the exponent of \( p \) in the prime factorization of \( a_i \).

\vspace{1em}

\begin{center}
\renewcommand{\arraystretch}{1.2}
\begin{tabular}{|c|c|c|c|}
\hline
\textbf{Partition} & \textbf{Prime Factorizations} & \textbf{Max Exponents} & \textbf{LCM} \\
\hline
\( (5) \) & \( 5^1 \) & \( 5^1 \) & 5 \\
\hline
\( (4,1) \) & \( 2^2, 1 \) & \( 2^2 \) & 4 \\
\hline
\( (3,2) \) & \( 3^1,\ 2^1 \) & \( 2^1 \cdot 3^1 \) & \textbf{6} \\
\hline
\( (3,1,1) \) & \( 3^1,\ 1,\ 1 \) & \( 3^1 \) & 3 \\
\hline
\( (2,2,1) \) & \( 2^1,\ 2^1,\ 1 \) & \( 2^1 \) & 2 \\
\hline
\( (2,1,1,1) \) & \( 2^1,\ 1,\ 1,\ 1 \) & \( 2^1 \) & 2 \\
\hline
\( (1,1,1,1,1) \) & \( 1,\dots,1 \) & none & 1 \\
\hline
\end{tabular}
\end{center}

\vspace{1em}

\noindent
We observe that the maximum value is:
\[
f(5) = \mathrm{lcm}(3,2) = 6.
\]
\end{proof}

\begin{proof}[Modern Solution]
Let \( a_1 = 3 \) and \( a_2 = 2 \), two positive integers with unique prime factorizations:
\[
\begin{aligned}
a_1 &= 3 = 2^0 \cdot 3^1 \cdot 5^0 \cdots, \\
a_2 &= 2 = 2^1 \cdot 3^0 \cdot 5^0 \cdots.
\end{aligned}
\]

\noindent
Let \( \{p_j\}_{j=1}^\infty = (2, 3, 5, 7, \dots) \) denote the sequence of all primes. For each \( a_i \), define its prime exponent vector
\[
\mathbf{v}_i = (\beta_{i,1}, \beta_{i,2}, \beta_{i,3}, \dots),
\]
where \( \beta_{i,j} \in \mathbb{Z}_{\geq 0} \) and \( a_i = \prod_{j=1}^\infty p_j^{\beta_{i,j}} \). Then:
\[
\begin{aligned}
\mathbf{v}_1 &= (0, 1, 0, 0, \dots), \\
\mathbf{v}_2 &= (1, 0, 0, 0, \dots).
\end{aligned}
\]
\noindent
By the \textbf{Gauss--Landau Modern Theorem}, we compute:
\[
\begin{aligned}
\mathrm{lcm}(a_1, a_2) &= \prod_{j=1}^\infty p_j^{\max(\beta_{1,j}, \beta_{2,j})}
= 2^{\max(0,1)} \cdot 3^{\max(1,0)} = 2^1 \cdot 3^1 = 6.
\end{aligned}
\]
\end{proof}
\noindent
In 1938, Hardy and Wright \cite{Hardy1938} incorporated Landau’s exact exponented‑prime formulations, making them standard in elementary number theory. They present:
\[
(n, m) = \prod_{j=1}^{k} p_j^{\min(i_j, j_j)}, \qquad [n, m] = \prod_{j=1}^{k} p_j^{\max(i_j, j_j)}.
\]
From this, one immediately derives the classical identity:
\[(n, m) \cdot [n, m] = n \cdot m, \] 
Clearly grounded in Landau’s approach.

\vspace{2mm}
\noindent
Later, in 1958, Landau E., \cite{Landau1958} published a book titled Elementary Number Theory, in which he proposed a problem that can be easily solved using the Gauss-Landau Theorem.

\begin{problem}[Landau, 1958]
Let \( a, b, c \in \mathbb{N} \). Prove the following identity:
\[
\gcd\big(\mathrm{lcm}(a,b),\ \mathrm{lcm}(b,c),\ \mathrm{lcm}(a,c)\big) = 
\mathrm{lcm}\big(\gcd(a,b),\ \gcd(b,c),\ \gcd(a,c)\big).
\]
\end{problem}
\begin{proof}[Solution]
We will prove the identity using the modern prime-exponent notation associated with the Gauss–Landau theorem. Let \( a, b, c \in \mathbb{N} \), and express them via their canonical prime factorizations:
\[
a = \prod_p p^{\alpha_p}, \quad b = \prod_p p^{\beta_p}, \quad c = \prod_p p^{\gamma_p},
\]
where the exponents \( \alpha_p, \beta_p, \gamma_p \in \mathbb{N}_0 \) and are zero for all but finitely many primes \( p \).
\medskip

\noindent
It is well known that for any two positive integers \( x = \prod_p p^{x_p} \) and \( y = \prod_p p^{y_p} \), we have:
\[
\gcd(x, y) = \prod_p p^{\min(x_p, y_p)}, \quad 
\mathrm{lcm}(x, y) = \prod_p p^{\max(x_p, y_p)}.
\]

\noindent
Applying this to the given identity, we analyze the exponent at each prime \( p \). The exponent on the left-hand side becomes:
\[
\min\Big\{ \max(\alpha_p, \beta_p),\ \max(\beta_p, \gamma_p),\ \max(\alpha_p, \gamma_p) \Big\},
\]
while the exponent on the right-hand side is:
\[
\max\Big\{ \min(\alpha_p, \beta_p),\ \min(\beta_p, \gamma_p),\ \min(\alpha_p, \gamma_p) \Big\}.
\]
\noindent
Therefore, the identity holds if and only if for all primes \( p \),
\[
\min\big\{ \max(\alpha_p, \beta_p),\ \max(\beta_p, \gamma_p),\ \max(\alpha_p, \gamma_p) \big\}
=
\max\big\{ \min(\alpha_p, \beta_p),\ \min(\beta_p, \gamma_p),\ \min(\alpha_p, \gamma_p) \big\}.
\]
Hence, the original identity follows by taking the product over all primes \( p \in \mathbb{P} \).
\end{proof}

In modern textbooks, such as those by Rosen \cite{Rosen}, Burton \cite{Burton}, Jones R., \cite{Jones2010}, and A., Smith \cite{Smith2015}
this technique appears in exercises but is rarely formalized. Students are often shown how to calculate $\gcd(60, 90)$ by factoring:
\begin{align*}
60 &= 2^2 \cdot 3^1 \cdot 5^1, \\
90 &= 2^1 \cdot 3^2 \cdot 5^1,
\end{align*}
and then taking the minimum exponents of the common primes:
\[
\gcd(60,90) = 2^1 \cdot 3^1 \cdot 5^1 = 30.
\]

\section{Discussion and Applications}

Although this result is implied in more advanced contexts, it is seldom articulated in elementary curricula. Assigning a name such as the \emph{Gauss–Landau Theorem} can help students recall and apply the concept more effectively in problems involving multiple integers.

\section*{Acknowledgements}
We thank the mathematical community for their ongoing support and insight. This work was inspired by a desire to clarify foundational number-theoretic principles for students.

\end{document}